\newtheorem{definition}{Definition}
\newtheorem{theorem}{Theorem}
\title{Optimal Sensor and Actuator Placement in \\Complex Dynamical Networks}
\author{Tyler H. Summers and John Lygeros
\thanks{T.H. Summers and J. Lygeros are with the Automatic Control Laboratory, ETH Zurich, Switzerland, email: \{tsummers,jlygeros\}@control.ee.ethz.ch This work is partially supported by the ETH Zurich Postdoctoral Fellowship Program. }}
\date{\today}                                           
\begin{document}
\maketitle

\begin{abstract}
Controllability and observability have long been recognized as fundamental structural properties of dynamical systems, but have recently seen renewed interest in the context of large, complex networks of dynamical systems. A basic problem is sensor and actuator placement: choose a subset from a finite set of possible placements to optimize some real-valued controllability and observability metrics of the network. Surprisingly little is known about the structure of such combinatorial optimization problems. In this paper, we show that an important class of metrics based on the controllability and observability Gramians has a strong structural property that allows efficient global optimization: the mapping from possible placements to the trace of the associated Gramian is a \emph{modular set function}. We illustrate the results via placement of power electronic actuators in a model of the European power grid.
\end{abstract}

\section{Introduction}

Controllability and observability have been recognized as fundamental structural properties of dynamical systems since the seminal work of Kalman in 1960 \cite{kalman1960contributions}, but have recently seen renewed interest in the context of large, complex networks, such as power grids, the Internet, transportation networks, and social networks. A prominent example of this recent interest is \cite{liu2011controllability}, which, based on Kalman's rank condition and the idea of structural controllability, presents a graph theoretic maximum matching method to efficiently identify a minimal set of so-called driver nodes through which time-varying control inputs can move the system around the entire state space (i.e., render the system controllable). The method of \cite{liu2011controllability} is applied across a range of technological and social systems, leading to several interesting  and surprising conclusions. Using a metric of controllability given by the fraction of driver nodes in the minimal set required for complete controllability, it is shown that sparse inhomogeneous networks are difficult to control while dense homogeneous networks are easier. It is also shown that the minimum number of driver nodes is determined mainly by the degree distribution of the network. Many other studies of controllability in complex networks have followed, including \cite{rajapakse2011dynamics,nepusz2012controlling,wang2012optimizing,tang2012epc,tang2012identifying}. 

One issue with the approach taken by \cite{liu2011controllability} and much of the follow up work is that the quantitative notion of controllability discussed in \cite{liu2011controllability} (namely, the number/fraction of required driver nodes) is rather crude in some settings. This was noted, for example, by \cite{muller2011few} in response to the surprising result in \cite{liu2011controllability} that genetic regulatory networks seem to require many driver nodes, which apparently contradicts other findings in biological literature on cellular reprogramming. 
Rather than finding a set of driver nodes that would render a network completely controllable, a more appropriate strategy might be to choose from a finite set of possible actuators and sensors placements to optimize some real-valued controllability and observability metrics of the network, which is not considered in \cite{liu2011controllability}. There is a variety of more sophisticated metrics for controllability and observability that have been proposed in the systems and control literature on sensor and actuator placement or selection problems in dynamical systems; see, e.g., the survey paper \cite{van2001review}. One important class of metrics involves the controllability and observability \emph{Gramians}, which are symmetric positive semidefinite matrices whose structure defines energy-related notions of controllability and observability.

While a variety of metrics have been proposed \cite{van2001review}, the corresponding combinatorial optimization problems for sensor and actuator placement are less well-understood. These can be solved by brute force for small problems by testing all possible placement combinations. However, for problems that arise in large networks, testing all combinations quickly becomes infeasible and so the problems are generally thought to be very difficult combinatorial optimization problems, requiring inefficient integer programming techniques in general. Some techniques based on efficient $\ell_1$ heuristics have been proposed, e.g. \cite{hassibi1999low}, but it is not clear when they work.

In the present paper, we show that one important class of metrics of controllability and observability, previously thought to lead to difficult combinatorial optimization problems \cite{van2001review}, can be in fact easily optimized, even for very large networks. In particular, we show that the mapping from subsets of possible actuator/sensor placements to any linear function of the associated controllability or observability Gramian has a strong structural property: it is a \emph{modular set function}. We also describe how this observation defines a new dynamic network centrality measure for networks whose dynamics are described by linear models, assigning a control energy-related ``importance'' value to each node in the network. We illustrate the results in power electronic actuator placement in a model of the European power grid.


The rest of the paper is organized as follows. Section II reviews basics of linear dynamical systems and controllability. Section III introduces the notions of modular, submodular, and supermodular set functions and shows that the set function mapping possible actuator placements to any linear function of the controllability Gramian is modular. Section IV presents a case study in a power network. Section V gives concluding remarks and and outlook for future research.

\section{Linear Models of Network Dynamics}
This section defines a linear model for network dynamics and reviews and interprets metrics for controllability based on the controllability Gramian. The material in this section is mostly standard and can be found in many texts on linear system theory, e.g. \cite{kailath1980linear,callier1991linear}; we discuss the material mostly to set our notation. Since controllability and observability are dual properties \cite{kalman1959general}, we focus only on controllability; all of the results have analogous counterparts and interpretations for observability. 

While virtually all real systems have nonlinear dynamics, there is a local structural equivalence between nonlinear models and associated linearized models (via the Hartman-Grobman theorem), and the resulting linear models are widely used across many engineering and science disciplines. We therefore focus on linear, time-invariant dynamical network models, in which the dynamics are given by
\begin{equation} \label{linearmodel}
\begin{aligned}
\dot{x}(t) &= A x(t) + B u(t), \quad x(0) = x_0, \\
         y(t) &= C x(t)
\end{aligned}
\end{equation}
where $x(t) \in \mathbf{R}^n$ represents the state of the network and $u(t) \in \mathbf{R}^m $ represents the control inputs that can be used to influence the networks dynamics. For example, $x(t)$ might represent voltages, currents, or frequencies in devices in a power grid, species concentrations in a genetic regulatory network, or individual opinions or propensities for product adoption in a social network. 
The dynamics matrix induces a graph on the nodes of the network:  there is an edge between two nodes $i$ and $j$ if $a_{ij}$ is non-zero. The matrix $C \in \mathbf{R}^{p\times n}$ is typically interpreted as a set of linear state measurements, but here we will interpret it as a weight matrix whose rows define important directions in the state space.


\subsection{Controllability}
\begin{definition}[Controllability]
A dynamical system is called \emph{controllable} over a time interval $[0,t]$ if given any states $x_0$, $x_1 \in \mathbf{R}^n$, there exists an input $u(\cdot):[0,t] \rightarrow \mathbf{R}^m$ that drives the system from $x_0$ at time $0$ to $x_1$ at time $t$. 
\end{definition}
Kalman's well-known rank condition states that a linear dynamical system is controllable if and only if $[B, AB, ..., A^{n-1} B]$ is full rank. Since rank is a \emph{generic} property of a matrix, it has the same value for almost all values of the non-zero entries of $A$ and $B$ (assuming that the non-zero entries are independent). The controllability property is thus at its core a structural property of the graph defined by $A$ and $B$, as captured in the graph-theoretic concept of \emph{structural controllability} described by Lin in \cite{lin1974structural}, which underpins the recent results of \cite{liu2011controllability}. However, while Kalman's rank condition is widely used, it only gives a \emph{binary} metric for controllability. It is interesting to consider more sophisticated quantitative metrics for controllability in complex networks. 

\subsection{An energy-related controllability metric}
Every actuator in a real system is energy limited, so an important class of metrics of controllability deals with the amount of input energy required to reach a given state from the origin. In particular, we can pose the following optimal control problem seeking the minimum energy input that drives the system from the origin to a final state $x_f$ at time $t$:
\begin{equation} \label{optprob}
\begin{aligned}
 \underset{{u(\cdot) \in \mathcal{L}_2  }}{\text{minimize}} & &&\int_0^t \Vert u(\tau) \Vert^2 d\tau \\
 \text{subject to} & && \dot{x}(t) = Ax(t) + Bu(t), \\
 			  & && x(0) = 0, \quad x(t) = x_f
 \end{aligned}
\end{equation}
Standard methods from optimal control theory can be used to derive the solution. 
If the system is controllable, the optimal input has the form 
\begin{equation}
\begin{aligned}
u^*(\tau) = B^T e^{A^T(t-\tau)} &\left( \int_0^t e^{A \sigma} B B^T e^{A^T \sigma} d\sigma \right)^{-1} x_f, \\
 &0 \leq \tau \leq t
\end{aligned}
\end{equation}
and the resulting minimum energy is
\begin{equation}
\int_0^t \Vert u^*(\tau) \Vert^2 d\tau = x_f^T \left(  \int_0^t e^{A \sigma} B B^T e^{A^T \sigma} d\sigma \right)^{-1} x_f.
\end{equation}
The matrix
\begin{equation}
W_c(t) = \int_0^t e^{A \tau} B B^T e^{A^T \tau} d\tau \ \in \mathbf{R}^{n\times n}
\end{equation}
is called the \emph{controllability Gramian} at time $t$. The controllability Gramian is positive semidefinite and has the same rank as $[B, AB, ..., A^{n-1} B]$.  It defines an ellipsoid in the state space
\begin{equation}
\mathcal{E}_{min}(t) = \left\{ x \in \mathbf{R}^n \vert  x^T W_c(t)^{-1} x \leq 1 \right\}
\end{equation}
that contains the set of states reachable in $t$ seconds with one unit or less of input energy. The eigenvectors and corresponding eigenvalues of $W_c$ define the semi-axes and corresponding semi-axis lengths of the ellipsoid. Eigenvectors of $W_c$ associated small eigenvalues (large eigenvalues of $W_c^{-1}$) define directions in the state space that are less controllable (require large input energy to reach), and eigenvectors of $W_c$ associated with large eigenvalues (small eigenvalues of $W_c^{-1}$) define directions in the state space that are more controllable (require small input energy to reach). 

For stable systems, the state transition matrix $e^{At}$ comprises decaying exponentials, so a finite positive definite limit of the controllability Gramian always exists and is given by
\begin{equation}
W_c = \int_0^\infty e^{A \tau} B B^T e^{A^T \tau} d\tau \ \in \mathbf{R}^{n\times n}
\end{equation}
This matrix defines an ellipsoid in the state space that gives the states reachable with one unit or less of energy, regardless of time. This infinite-horizon controllability Gramian can be computed by solving a Lyapunov equation
\begin{equation}
A W_c + W_c A^T + B B^T = 0,
\end{equation}
which is a system of linear equations and is therefore easily solvable, even for large systems. Specialized algorithms have been developed to compute the solution; see, e.g., \cite{bartels1972solution,hammarling1982numerical}.

The controllability Gramian gives a more sophisticated energy-related quantitative picture of controllability, but we still need to form a scalar metric for $W_c$, which is a positive semidefinite matrix. We want $W_c$ ``large'' so that $W_c^{-1}$ is ``small'', requiring small amount of input energy to move around the state space. There are a number of possible metrics for the size of $W_c$, including minimum eigenvalue, determinant, trace, sums/products of the first $k$ eigenvalues, etc.. We focus here on the trace metric, which as we show below, has interesting interpretations in terms of average energy and linear system norms and has a strong structural property for actuator placement problems.

\subsection{Interpretations of trace$(W_c)$}
\textbf{Average Energy}: The average value of the minimum control energy over the unit hypersphere is proportional to the trace of $W_c^{-1}$:
\begin{equation}
\frac{\int_{\Vert x \Vert = 1} x^T W_c^{-1} x dx}{\int_{\Vert x \Vert = 1} dx} = \frac{1}{n} \textbf{tr} W_c^{-1}.
\end{equation}
The trace of $W_c$ is inversely related to the trace of $W_c^{-1}$, so maximizing $\textbf{tr} (W_c)$ effectively minimizes the average energy required to move around the state space in all directions. Maximizing a weighted trace, representing any linear function of a matrix, minimizes a weighted average energy required to move around the state space, with certain directions weighted differently, which can be encoded into the $C$ matrix.

\textbf{System $H_2$ norm}:  An important norm of a (stable) linear dynamical system is defined as
\begin{equation}
\Vert H \Vert_2  = \left( \textbf{tr} \frac{1}{2\pi} \int_{-\infty}^\infty H(j\omega) H(j\omega)^* d\omega  \right)^{1/2}.
\end{equation}
This can be interpreted as the RMS response of the system when it is driven by a white noise input. By the Parseval theorem, this is also given by
\begin{equation}
\Vert H \Vert_2 = \left( \textbf{tr} \int_0^\infty h(t)^T h(t) dt \right)^{1/2} = \Vert h(t) \Vert_2,
\end{equation}
where $h(t) = C e^{At} B$ is the impulse response matrix. Thus, the $H_2$ norm can also be interpreted as the $L_2$ norm, or energy, of the system response to a unit impulse input.

The connection between the system $H_2$ norm and the controllability Gramian can be seen as follows:
\begin{equation}
\begin{aligned}
\Vert H \Vert_2^2  &= \textbf{tr} \left( \int_0^\infty h(t) h(t)^T dt \right) \\
			     &= \textbf{tr} \left( C \int_0^\infty e^{A t} B B^T e^{A^T t} dt C^T \right) \\
			     &= \textbf{tr} (C W_c C^T)
\end{aligned}
\end{equation}
i.e., the system $H_2$ norm is a weighted trace of the controllability Gramian.

%

\section{Optimal Sensor and Actuator Placement in Networks}

\subsection{Set Functions, Modularity, and Submodularity}

Sensor and actuator placement problems can be formulated as \emph{set function} optimization problems. For a given finite set $V = \{1,...,M \}$, a \emph{set function} $f: 2^V \rightarrow \mathbf{R}$ assigns a real number to each subset of $V$. In our setting, the elements of $V$ represent potential locations for the placement of sensors or actuators in a dynamical system, and the function $f$ is a metric for how controllable or observable the system is for a given set of placements, which is to be maximized.

We consider set function optimization problems of the form
\begin{equation} \label{optprob}
 \underset{{S \subseteq V, \ |S| = k }}{\text{maximize}} \quad f(S).
\end{equation}
The problem is to select a $k$-element subset of $V$ that maximizes $f$. This is a finite combinatorial optimization problem, so one way to solve it is by brute force: simply enumerate all possible subsets of size $k$, evaluate $f$, and pick the best subset. However, we are interested in cases arising from complex networks in which the number of possible subsets is very large. The number of possible subsets grows extremely fast as $V$ increases, so the brute force approach quickly becomes infeasible as $V$ becomes large. 

Other approaches for solving (\ref{optprob}) include general combinatorial optimization methods, e.g. branch and bound,  and methods based on $\ell_1$ optimization heuristics \cite{hassibi1999low}. However, general combinatorial methods do not scale well with problem size, and while $\ell_1$ heuristics scale well in principle, it is unclear when they work, and they provide no approximation guarantees. 

Another broad approach is to exploit structural properties of the set function $f$ that make them more amenable to optimization, such as modularity and submodularity \cite{lovasz1983submodular,fujishige2005submodular}. Submodularity is defined as follows.
\begin{definition}[Submodularity]
A set function $f: 2^V \rightarrow \mathbf{R}$ is called submodular if for all subsets $A \subseteq B \subseteq V$ and all elements $s \notin B$, it holds that
\begin{equation} \label{submod1}
f(A \cup \{s\}) - f(A) \geq f(B \cup \{s\}) - f(B),
\end{equation}
or equivalently, if for all subsets $A,B \subseteq V$, it holds that
\begin{equation} \label{submod2}
f(A) + f(B) \geq f(A\cup B) + f(A\cap B).
\end{equation}
\end{definition}
Intuitively, submodularity is a diminishing returns property where adding an element to a smaller set gives a larger gain than adding one to a larger set. A set function is called \emph{supermodular} if the reversed inequalities in (\ref{submod1}) and (\ref{submod2}) hold, and is called \emph{modular} if it is both submodular and supermodular, i.e. for all subsets $A,B \subseteq V$, we have $f(A \cap B) + f(A\cup B) = f(A) + f(B)$. A modular function has the following simple, equivalent characterization \cite{lovasz1983submodular}:
\begin{theorem}[Modularity] \label{modularity}
A set function $f: 2^V \rightarrow \mathbf{R}$ is modular if and only if for any subset $S \subseteq V$ it can be expressed as
\begin{equation} \label{submod3}
f(S) = w(\emptyset) + \sum_{s \in S} w(s)
\end{equation}
for some weight function $w : V \rightarrow \mathbf{R}$.
\end{theorem}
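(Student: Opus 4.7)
The plan is to prove both directions of the equivalence, with the "if" direction being a direct calculation and the "only if" direction done by induction on $|S|$.

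For the sufficiency (if) direction, I would assume $f(S) = w(\emptyset) + \sum_{s \in S} w(s)$ and verify the modular identity $f(A) + f(B) = f(A \cup B) + f(A \cap B)$ directly. Expanding both sides gives $2w(\emptyset) + \sum_{s \in A} w(s) + \sum_{s \in B} w(s)$ on the left and $2w(\emptyset) + \sum_{s \in A \cup B} w(s) + \sum_{s \in A \cap B} w(s)$ on the right. Equality then follows from the elementary identity $\sum_{s \in A} w(s) + \sum_{s \in B} w(s) = \sum_{s \in A \cup B} w(s) + \sum_{s \in A \cap B} w(s)$, which is essentially inclusion-exclusion applied to the counting function $\mathbf{1}_{s \in A} + \mathbf{1}_{s \in B}$.

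For the necessity (only if) direction, I would define the candidate weight function by $w(\emptyset) := f(\emptyset)$ and $w(s) := f(\{s\}) - f(\emptyset)$ for each $s \in V$, and then prove the representation formula by induction on $|S|$. The base cases $|S| = 0$ and $|S| = 1$ hold by construction. For the inductive step with $|S| \geq 2$, pick any element $s \in S$ and apply the modular identity with $A := S \setminus \{s\}$ and $B := \{s\}$, noting that $A \cup B = S$ and $A \cap B = \emptyset$. This yields
\begin{equation}
f(S) = f(S \setminus \{s\}) + f(\{s\}) - f(\emptyset).
\end{equation}
The inductive hypothesis applied to $S \setminus \{s\}$ expresses $f(S \setminus \{s\})$ as $w(\emptyset) + \sum_{t \in S \setminus \{s\}} w(t)$, and substituting $f(\{s\}) - f(\emptyset) = w(s)$ immediately gives the desired formula $f(S) = w(\emptyset) + \sum_{t \in S} w(t)$.

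I do not anticipate a serious obstacle here; the theorem is essentially a structural tautology for modular functions. The only mild subtlety is choosing the right decomposition in the inductive step—splitting off a single element via $A = S \setminus \{s\}$, $B = \{s\}$ so that $A \cap B = \emptyset$ collapses the modular identity into a clean recursion. A stylistic choice is whether to present $w(\emptyset)$ as a separate additive constant (as the theorem statement does) or absorb it into the sum; I would keep them separate to match the statement exactly.
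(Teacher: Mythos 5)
Your proof is correct, and both directions are handled cleanly: the sufficiency direction reduces to the counting identity $\sum_{s \in A} w(s) + \sum_{s \in B} w(s) = \sum_{s \in A \cup B} w(s) + \sum_{s \in A \cap B} w(s)$, and the necessity direction correctly defines $w(\emptyset) = f(\emptyset)$, $w(s) = f(\{s\}) - f(\emptyset)$ and recovers the representation by splitting off one element at a time, using modularity with $A = S \setminus \{s\}$, $B = \{s\}$ so that $A \cap B = \emptyset$. For comparison: the paper itself gives no proof of this theorem --- it is quoted as a known characterization with a citation to Lov\'asz's work on submodular functions --- so there is no ``paper approach'' to diverge from; your argument is the standard one and fills that gap. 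One minor stylistic point worth a sentence in a write-up is that $w(\emptyset)$ in the statement is really just an additive constant (the weight function is declared only on $V$), which your choice $w(\emptyset) := f(\emptyset)$ makes explicit.
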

Modular set functions are analogous to linear functions and have the property that each element of a subset gives an independent contribution to the function value. In particular, modular functions are easily optimized: since the contribution of each element is independent, one can simply evaluate the set function for each individual element and then choose the best $k$ individual elements to obtain the best subset of size $k$. 

Submodularity plays a similar role in combinatorial optimization as convexity in continuous optimization \cite{lovasz1983submodular}. It occurs often in applications (though is underexplored in systems and control theory); is preserved under various operations, allowing design flexibility; has a beautiful and practically useful mathematical theory; and there are efficient methods for minimizing and near-optimal methods for maximizing. 

We now demonstrate the modularity of a class of controllability metrics involving linear functions of the controllability Gramian.

\subsection{Main Result: Any linear function of the controllability Gramian is a modular set function}
Suppose we are given a stable system $A$ matrix and a set of possible $B$ matrix columns $V = \{b_1,..., b_M \} $. The problem is to choose a subset of the possible $B$ matrix columns to maximize a metric of controllability. Here, we consider the weighted trace of the controllability Gramian, i.e., any linear function of the controllability Gramian. For a given $S \subseteq V$, we form $B_S = [b_s], \ s \in S$ and the associated controllability Gramian $W_S  = \int_0^\infty e^{A \tau} B_S B_S^T e^{A^T \tau} d\tau$, which is the unique positive definite solution the Lyapunov equation
\begin{equation}
AW_S + W_S A^T + B_S B_S^T = 0.
\end{equation}
To simplify notation, we write $W_s$ for $W_{\{s\} }$. We have the following result.
\begin{theorem} \label{contrmod}
Let $A \in \mathbf{R}^{n \times n}$ be a stable dynamics matrix and $V = \{b_1,..., b_M \} $ be a set of possible actuator locations. The set function mapping subsets $S \subseteq V$ to any linear function of the associated controllability Gramian, i.e. $f(S) = \mathbf{tr}(\bar{C} W_S)$ for any weighting matrix $\bar{C}\in \mathbf{R}^{n \times n}$, is modular.
\end{theorem}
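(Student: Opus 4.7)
The plan is to reduce the modularity claim to the additive characterization of modular functions given in Theorem~\ref{modularity}, using the fact that both the Lyapunov equation and the trace are linear operations, while the input coupling enters as the rank-one-additive quantity $B_S B_S^T = \sum_{s\in S} b_s b_s^T$.

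First I would write the Gramian in its integral form $W_S = \int_0^\infty e^{A\tau} B_S B_S^T e^{A^T\tau}\,d\tau$ and expand $B_S B_S^T$ as the sum of outer products $\sum_{s\in S} b_s b_s^T$. Pulling the finite sum outside the integral gives $W_S = \sum_{s\in S} W_s$, where $W_s$ is the Gramian associated with the single column $b_s$. (Equivalently, one can argue from the Lyapunov equation: summing the single-column equations $A W_s + W_s A^T + b_s b_s^T = 0$ over $s\in S$ produces a valid equation with right-hand side $B_S B_S^T$, and stability of $A$ ensures uniqueness of the solution, so $\sum_s W_s$ must equal $W_S$.)

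Next I would apply linearity of the trace to obtain
\begin{equation*}
f(S) = \mathbf{tr}(\bar C W_S) = \sum_{s \in S} \mathbf{tr}(\bar C W_s).
\end{equation*}
Defining the weight function $w : V \to \mathbf{R}$ by $w(s) := \mathbf{tr}(\bar C W_s)$ and setting $w(\emptyset) = 0$ (consistent with $W_\emptyset = 0$ from the empty sum of outer products), this is exactly the representation
\begin{equation*}
f(S) = w(\emptyset) + \sum_{s \in S} w(s)
\end{equation*}
required by Theorem~\ref{modularity}, so $f$ is modular.

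There is no real obstacle here; the only subtlety worth flagging is the justification that $W_S$ actually decomposes additively over the columns of $B_S$. The integral argument makes this transparent, but if one prefers the Lyapunov-equation route, then stability of $A$ is the essential hypothesis, since it is what guarantees a unique solution to the Lyapunov equation and lets us conclude $W_S = \sum_{s \in S} W_s$ from the fact that both sides satisfy the same equation. Once that step is in hand, modularity follows immediately from linearity of the trace.
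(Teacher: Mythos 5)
Your proof is correct and follows essentially the same route as the paper: decompose $B_S B_S^T = \sum_{s\in S} b_s b_s^T$, pull the sum through the Gramian integral to get $W_S = \sum_{s\in S} W_s$, apply linearity of the trace, and invoke the additive characterization of modularity with $w(s) = \mathbf{tr}(\bar C W_s)$ and $w(\emptyset)=0$. The alternative Lyapunov-equation/uniqueness argument you mention is a fine extra observation but not needed, since the paper also uses the integral form directly.
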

\begin{proof} 
We will prove the result directly using Theorem \ref{modularity}. Take any $S \subseteq V$ and let $B_S$ denote the input matrix formed by taking the associated columns defined by $S$. Note that 
\begin{equation}
\begin{aligned}
B_S B_S^T &= \left[\begin{array}{ccc}b_{s_1} & \cdots & b_{s_{|S|}}\end{array}\right] \left[\begin{array}{ccc}b_{s_1} & \cdots & b_{s_{|S|}}\end{array}\right]^T \\
		   &= \sum_{s \in S} b_s b_s^T.
\end{aligned}
\end{equation}
It is easy to see that the controllability Gramian associated with $B_S$ is simply a sum of the controllability Gramians associated with the individual columns of $B_S$:
\begin{equation}
\begin{aligned}
W_S  &= \int_0^\infty e^{A \tau} B_S B_S^T e^{A^T \tau} d\tau \\
           &= \int_0^\infty e^{A \tau} \sum_{s \in S} b_s b_s^T e^{A^T \tau} d\tau \\
           &= \sum_{s \in S} \int_0^\infty e^{A \tau} b_s b_s^T e^{A^T \tau} d\tau \\
           &= \sum_{s \in S} W_s 
\end{aligned}
\end{equation}
Now since trace is a linear matrix function, we have for any weight matrix $\bar{C} \in \mathbf{R}^{n \times n}$
\begin{equation}
\begin{aligned}
f(S) &= \textbf{tr}(\bar{C} W_S) \\
       &= \textbf{tr}\left(\sum_{s \in S} \bar{C} W_{s}\right) \\
       &= \sum_{s \in S} \textbf{tr} (\bar{C} W_{s})
\end{aligned}
\end{equation}
Thus, for any $s \in V$, we can define the weight function $w(s) = \textbf{tr}(\bar{C} W_s)$. Defining $w(\emptyset) = 0$, Theorem \ref{modularity} implies that $f(S) = \textbf{tr}(\bar{C} W_S)$ is a modular set function.
\end{proof}
Theorem \ref{contrmod} shows that each possible actuator placement gives an independent contribution to the trace of the controllability Gramian. Because of this, the actuator placement problem using this metric is easily solved: one needs only to compute the metric individually for each possible actuator placement, sort the results, and choose the best $k$. Based on the interpretations in the previous section, this means that placing actuators in a complex network to maximize the average amount of controllability available to move the system around the state space, or to maximize the energy in the system response to a unit impulse, is easily done. Since the result holds for the weighted trace, this gives considerable design freedom for actuator placement; important directions in the state space can be weighted and actuator placement done based on the weighted metric.

\subsection{A Dynamic Network Centrality Measure}
Network centrality measures are real-valued functions that assign a relative ``importance'' to each node within a graph. Examples include degree, betweenness, closeness, and eigenvector centrality. The meaning of importance and the relevance of various metrics depends highly on the modeling context. For example, PageRank, a variant of eigenvector centrality, turns out to be a much better indicator of importance than vertex degree in the context of networks of web pages, one of the core factors leading to Google's domination of web search.

In the context of complex dynamical networks, the controllability metric described above can be used to define a control energy-based centrality measure, describing the importance of a node in terms of its ability to move the system around the state space with a low-energy time-varying control input. In particular, imagine that it is possible to place an actuator at each individual node in the network; thus, define $V = \{e_1,...,e_n \}$, where $e_i$ is the standard unit basis vector in $\mathbf{R}^n$, i.e. $e_i$ has a $1$ in the $i$th entry and zeros elsewhere. We define the Control Energy Centrality for a complex dynamical network as follows.
\begin{definition}[Average Energy Controllability Centrality]
Given a complex network with $n$ nodes and an associated stable linear dynamics matrix $A \in \mathbf{R}^{n\times n}$, the Average Energy Controllability Centrality measure for node $i$ is given by
\begin{equation}
C_{CE}(i) = \mathbf{tr}(W_i), \quad i \in V
\end{equation}
where $W_i$ is the infinite-horizon controllability Gramian that satisfies $AW_i + W_i A^T + e_i e_i^T = 0$.
\end{definition}
An interesting topic for future work would be to explore the distribution of the Average Energy Controllability Centrality measure in random networks and networks from various application domains.

\section{Power electronic actuator placement in the European power grid}
New power electronic actuators, such as high voltage direct current (HVDC) links or flexible alternating current transmission devices (FACTS), can be used to improve transient stability properties in power grids by modulating active and reactive power injections to damp frequency oscillations and prevent rotor angle instability \cite{fuchs2011}. In this section, we illustrate the results via placement of such power electronic actuators in a model of the European power grid. We emphasize that this section is intended only to illustrate the theory in the preceding sections and show what kind of questions could be answered; many practical political and economic issues are neglected, and placements are evaluated entirely from a controllability perspective. 

We consider a simplified model of the European grid derived from \cite{Haase2006} with 74 buses, each of which is connected to a generator and a constant impedance load. We consider the placement of HVDC links, which are modeled as ideal current sources that can instantaneously inject AC currents into each bus; for modeling details see \cite{fuchs2011,fuchs2013a,fuchs2013b} The system dynamics we consider here are based on the swing equations, a widely-used nonlinear model for the time evolution of rotor angles and frequencies of each generator in the network \cite{kundur1994power}. Each HVDC link has three degrees of freedom that allow influence of the frequency dynamics at the corresponding buses. The nonlinear model is linearized\footnote{Ideally, one would of course want to evaluate actuator placement on the nonlinear model, but even evaluating controllability metrics can be extremely difficult computationally, even for small-scale nonlinear systems. This section is intended to illustrate the theory from the previous section, so we focus on a linearized model, though actuator placement problems for nonlinear networks are an important topic for future work.} about a desired operating condition for each possible HVDC link placement, and the placements are evaluated based on the linearized model.

Each generator has two associated states: rotor angle and frequency, which gives a 148-dimensional state space model, i.e., $A \in \mathbf{R}^{148\times 148}$. Since an HVDC link could be placed in principle between any two distinct nodes in the network, there are 2701 possible locations. To get an idea about the size of the search space, consider the problem of finding the best subset of size 10. This gives approximately $5.6\times 10^{27}$ possible combinations, far too many for a brute force search. As we have seen, the modularity property allows us to consider each placement individually.

Figure 1 shows the network and the 10 best placements according to the controllability Gramian trace metric with all state space directions weighted equally, i.e., $C = I_{148}$. The best two are relatively long lines connecting the northeast-southwest and northwest-southeast quadrants of the network, respectively. Interestingly, the next group of placements is concentrated in the southeast, indicating that there is room to improve control authority by increasing connectivity in this sparsely connected region. This also indicates a potential weakness in the trace metric, which may cluster actuators to get high controllability in a few directions at the expense of controllability in other directions. Figure 2 shows the sorted distribution of the metric, with the top few placements giving a substantial benefit over other placements. Figure 3 shows the 10 best placements according to the controllability Gramian trace metric, but with the frequency dynamics in the network weighted equally and the rotor angle dynamics ignored, i.e $C = I_{74} \otimes [0, \ 1]$. In this case, the optimal placements are more evenly distributed in the network.
%

\begin{figure}
\begin{center} 
\resizebox{0.99\linewidth}{!}{\includegraphics{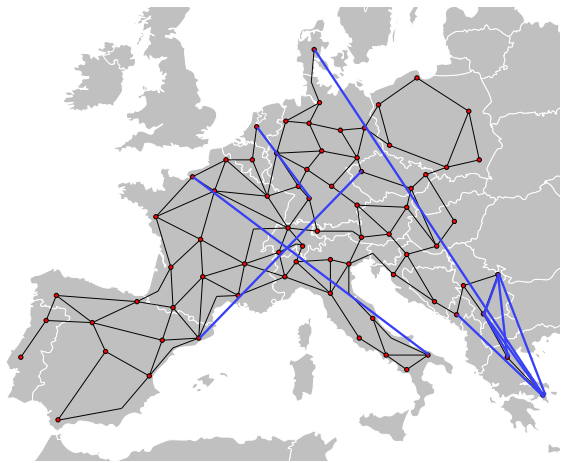}}
\caption{Best 10 HVDC line placements (in blue) according to the controllability Gramian trace metric.}
\end{center}
\end{figure}

\begin{figure}
\begin{center} 
\resizebox{0.89\linewidth}{!}{\includegraphics{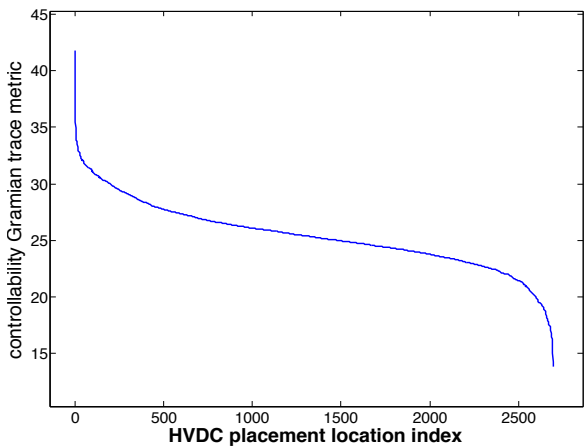}}
\caption{Distribution of the controllability Gramian trace metric.}
\end{center}
\end{figure}

\begin{figure}
\begin{center} 
\resizebox{0.99\linewidth}{!}{\includegraphics{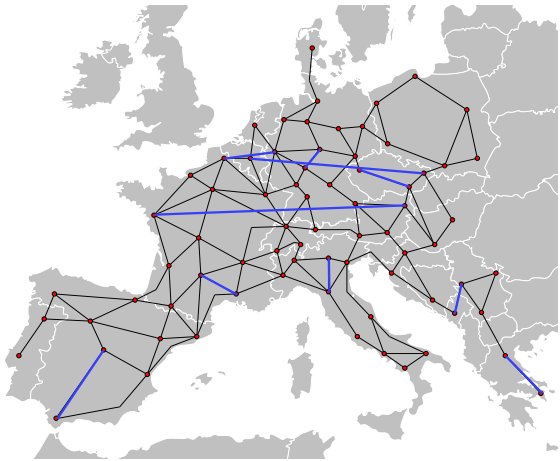}}
\caption{Best 10 HVDC line placements according to the controllability Gramian trace metric, weighted for frequency dynamics.}
\end{center}
\end{figure}

%
%

\section{Conclusions and Outlook}
We have considered optimal actuator and sensor placement problems in complex dynamical networks. These problems are in general difficult combinatorial optimization problems; however, we have shown that an important class of metrics related to the controllability and observability Gramians yield modular set functions and are therefore efficiently globally optimized. We also defined the Average Energy Controllability Centrality measure, which assigns an importance value to each node in a dynamical network based on its ability to move the system around the state space with a low-energy time-varying control input. The results were illustrated via placement of power electronic actuators in a model of the European power grid. 

There are many open problems involving the structure of combinatorial optimization problems in the optimal placement of sensors and actuators in complex networks. What other linear system controllability or observability metrics have exploitable combinatorial structure, e.g., modularity or submodularity? Further results have been obtained in \cite{summers2014}. Our ongoing work is exploring other case studies in power networks, biological networks, social networks, and discretized models of infinite-dimensional systems. Finally, more complicated systems (e.g. constrained, nonlinear, hybrid, etc.) require a full reachability analysis in general, which does not scale well, but one could explore how efficient methods could be used to obtain approximate metrics in these types of systems.

\section*{Acknowledgements}
The authors would like to thank Dr. Alexander Fuchs for providing details and helpful discussion about the power system model discussed in Section IV.

\bibliographystyle{plain}  
\bibliography{refs}  

\end{document}